\newtheorem{theorem}{Theorem}[section]
\newtheorem{corollary}[theorem]{Corollary}
\numberwithin{equation}{section}
\theoremstyle{remark}
\newtheorem{remark}[theorem]{Remark}
\newtheorem{example}[theorem]{\bf Example}
\newcommand{\R}{\mathbb{R}}
\newcommand{\dd}{\mathrm{d}}
\begin{document}

\title[On Minimal hypersurfaces with symmetries]{\bf{On embedded minimal hypersurfaces in $S^{n+1}$ with  symmetries }}
\author{Changping Wang, Peng Wang}

\address{College of Mathematics and Informatics, FJKLMAA, Fujian Normal University, Fuzhou 350117, P. R. China}
\email{cpwang@fjnu.edu.cn}
\address{College of Mathematics and Informatics, FJKLMAA, Fujian Normal University, Fuzhou 350117, P. R. China}
\email{pengwang@fjnu.edu.cn}
\thanks{CPW was partly supported by the Project 11831005  of NSFC. PW was partly supported by the Project 11971107 of NSFC. The authors are thankful to Prof. Haizhong Li and Prof. Zhenxiao Xie for value discussions. The authors are thankful to Prof. Haizhong Li for pointing out to us Perdomo's conjecture in Reference \cite{Per2}}
\begin{abstract} In this note, we  generalize a characterization of the Clifford torus due to Ros.
	Let $f:M\rightarrow S^{n+1}$ be an embedded  closed minimal hypersurface. Assume there are $(n+2)$ great hyperspheres of $S^{n+1}$ perpendicular to each other, such that $M$ is symmetric with respect to them. Let $S$ denote the square of the length of the second fundamental form of $f$ and  let $\bar S=\frac{1}{Vol(M)}\int_{M} S\dd M$ be the average of $S$. Then $\bar S\geq n$ with equality holding if and only if $f$ is the Clifford torus $C_{m,n-m}$. It can be rewritten as a Simons' type theorem: If $0\leq \int_M (n-S)\dd M$, then either $S\equiv0$ or $S\equiv n$. This answers partially   a conjecture by Perdomo.
  Moreover, the estimate of the Willmore energy of $f$ is built: $W(M)\geq n^{\frac{n}{2}}Vol(M)$.
\end{abstract}

\date{\today}
\maketitle

\vspace{0.5mm}  {\bf \ \ ~~Keywords:}    Clifford torus; Average of length of the Second fundamental form; generalized Lawson conjecture; Chern's Conjecture; Willmore energy.\vspace{2mm}

{\bf\   ~~ MSC(2020): \hspace{2mm} 53C40, 53A31}


\section{Introduction}

The study of minimal hypersurfaces in $S^{n+1}$ is an important topic in geometry and leads to many progress in geometric analysis and other fields. The famous Yau conjecture states that the embeddness of the (oriented and closed) minimal hypersurface will ensure that first eigenvalue (of its Laplacian) is equal to $n$. It attains a wide interest and stay an open problem so far, except some partial results. See e.g. \cite{choesoret,Ros,TY2} and reference therein for some partial results and further discussion. If Yau's conjecture holds, then the Courant Nodal Theorem will show that every hypersphere of $S^{n+1}$ will divide the minimal hypersurface into two connected components, since now the coordinate functions are first eigenfunctions. In \cite{Ros}, Ros proved that such a two-piece property does hold for the case of $n=2$. Moreover, he proved the following description of the Clifford torus (which now is a corollary of \cite{Brendle}):
\begin{theorem}{\em Ros }\cite{Ros} \label{thm-0} Assume that the embedded minimal torus $f$ in $S^{3}$ is symmetric with respect to four coordinate hyperplanes, then $f$ is congruent to the Clifford torus.
\end{theorem}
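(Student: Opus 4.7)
The cleanest approach, as the parenthetical remark in the excerpt already suggests, is to invoke Brendle's resolution of the Lawson conjecture: every embedded closed minimal torus in $S^3$ is congruent to the Clifford torus. Applied directly to $f:M\to S^3$, this yields the conclusion without ever using the four-hyperplane symmetry hypothesis.

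If one insists on a self-contained argument in the spirit of Ros's original proof, I would proceed via the two-piece property. First I would establish that, for any totally geodesic $2$-sphere $\Pi\subset S^3$, the complement $M\setminus\Pi$ of an embedded closed minimal surface has exactly two connected components. This reduces to a maximum-principle argument applied to the linear height function $h_v(x)=\langle x,v\rangle$ (with $v$ a unit normal to $\Pi$), which on $M$ satisfies $\Delta_M h_v=-2h_v$; a hypothetical third nodal component is excluded by a standard test-function comparison.

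Applying the two-piece property to each coordinate hyperplane $\Pi_i=\{x_i=0\}$, the coordinate function $x_i|_M$ has exactly two nodal domains and satisfies $\Delta_M x_i=-2x_i$, so Courant's nodal domain theorem identifies it as a first Laplace eigenfunction with $\lambda_1(M)=2$. The assumed symmetry makes each $x_i$ antisymmetric under the reflection $\sigma_i$, swapping the two nodal domains isometrically, and Takahashi's theorem recovers $f$ itself as a minimal immersion by first eigenfunctions.

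The hard part will be the final rigidity step — upgrading this spectral and symmetry data to conclude $M=C_{1,1}$. The plan is to exploit the $\mathbb{Z}_2^4$-action: the positive octant cut $M\cap\{x_1,x_2,x_3,x_4>0\}$ is a fundamental domain of area $\mathrm{Area}(M)/16$, and a careful estimate on this domain combined with a $\lambda_1\cdot\mathrm{Area}$-type inequality pins the total area down to the Clifford value $2\pi^2$, forcing $f=C_{1,1}$. I would expect this last area/rigidity step to be the main technical obstacle in any proof that avoids Brendle's theorem.
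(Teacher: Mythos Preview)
Your first route via Brendle's theorem is correct, and the paper itself notes that Ros's result is now a corollary of \cite{Brendle}.

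Your second, ``self-contained'' outline, however, misses the actual mechanism of Ros's proof (which the paper reproduces in proving Theorem~\ref{thm-1}). After the two-piece property you work only with the coordinate functions $x_i$ and head toward an unspecified area estimate. Ros instead brings in the components $N_i$ of the \emph{unit normal}: the reflection symmetry in $P_i=\{x_i=0\}$ forces $M$ to meet $P_i$ orthogonally, so $N_i|_{\partial M_i^+}=0$. Since $f_i>0$ on $M_i^+$ with $\Delta f_i+2f_i=0$, the first Dirichlet eigenvalue of $M_i^+$ equals $2$; taking $N_i$ as a test function and using $\Delta N_i+SN_i=0$ gives $\int_{M_i^+}SN_i^2\ge 2\int_{M_i^+}N_i^2$. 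Summing over $i$ and over $M_i^{\pm}$ yields $\int_M S\ge 2\,\mathrm{Vol}(M)$. The decisive step you never reach is that for a minimal \emph{torus} in $S^3$ one has $K=1-\tfrac{S}{2}$, so Gauss--Bonnet forces $\int_M S=2\,\mathrm{Vol}(M)$; equality throughout then gives $S\equiv 2$, and the Chern--do~Carmo--Kobayashi/Lawson rigidity theorem finishes.

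Two specific gaps in your sketch: Courant's theorem does \emph{not} let you conclude $\lambda_1(M)=2$ from $x_i$ having two nodal domains---Courant bounds the nodal count of the $k$-th eigenfunction from above, it does not bound from below the index of an eigenfunction that happens to have two nodal domains. What one legitimately obtains (and what the paper uses) is the Dirichlet statement $\lambda_1^D(M_i^+)=2$. And your proposed $\lambda_1\cdot\mathrm{Area}$ inequality on the octant remains a hope rather than an argument; nothing of that sort appears in Ros's proof, and it is unclear it can be made to work short of reproving Brendle's theorem.
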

Ros' two-piece property theorem  was generalized to all oriented, embedded, closed minimal hypersurfaces by Choe and Soret \cite{choesoret} (an alternate proof can also be found in \cite{McG}). It is therefore natural to ask whether a similar characterization of the Clifford tori in $S^{n+1}$ holds or not, which turns out to be the main topic of this paper.\ \vspace{3mm}

Throughout this paper, we assume $M$ to be an oriented closed $n-$dimensional manifold.
Let $f:M\rightarrow S^{n+1}$ be a hypersurface in the unit sphere $S^{n+1}$, with its first and second fundamental form being $I=\sum_i(\omega^i)^2$ and $II=\sum_{i,j}h_{ij}\omega^i\omega^j$. The mean curvature $H$, the square of the length of the second fundamental form $S$ and
 the length of the traceless second fundamental form $\rho$ of $f$ are of the form
\begin{equation}
H=\frac{1}{n}\sum_{i}h_{ii},\ S=\sum_{i,j}(h_{ij})^2,\ \rho=\sqrt{S-nH^2}.
\end{equation}
We denote by $\bar S$ the average of $S$:
\begin{equation}
\bar S:=\frac{1}{Vol(M)}\int_MS \dd M.
\end{equation}
Here $Vol(M)=\int_M \dd M$ denotes the volume of $f$.
The Willmore functional is defined to be
\begin{equation}
W(M):=\int_M\rho^n\dd M.
\end{equation}
It is well-known that it is conformally invariant \cite{Chen, Ko, Li-h, Li-s, Wang}. We recall that the Clifford minimal tori are (see for example \cite{GLW, Li-h, Li-s} for more details.)
		\begin{equation}
			C_{m,n-m}=S^m\left(\sqrt{\frac{m}{n}}\right)\times S^{n-m}\left(\sqrt{\frac{n-m}{n}}\right), \ 1\leq m\leq n-1.
		\end{equation}

 Since we can chose $(n+2)$ coordinate hyperplanes such that the $(n+2)$ great hyperspheres perpendicular to each other are the intersections of $S^{n+1}$ with them, we will simply assume the hypersurfaces symmetric under the $(n+2)$ coordinate hyperplanes reflections.
\begin{theorem}\label{thm-1}
	Let $f:M\rightarrow S^{n+1}$ be an embedded  closed minimal hypersurface symmetric under the $(n+2)$ coordinate hyperplanes reflections, which is  not totally geodesic. Then
	\begin{equation}
	\bar S\geq n,
	\end{equation}
	with equality holding if and only if $f$ is congruent to   some Clifford minimal torus $C_{m,n-m}$.
\end{theorem}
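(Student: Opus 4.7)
The plan is to exploit two pointwise identities available on any closed minimal hypersurface $f:M\to S^{n+1}$. Set $x_i=\la f,e_i\ra$ for the coordinate functions and $N_i=\la N,e_i\ra$ for the components of the unit normal. Then $\Delta x_i=-n\,x_i$ (Takahashi, from $\Delta f=-nf$) and $\Delta N_i=-S\,N_i$ (a consequence of Codazzi together with $\mathrm{tr}\,A=nH=0$). Under the symmetry hypothesis, both $x_i$ and $N_i$ are odd under $\sigma_i$ and invariant under each $\sigma_j$ with $j\ne i$; in particular each vanishes on the slice $M\cap\{x_i=0\}$, so they share the same parity sector.

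The heart of the argument is spectral. By the Choe--Soret two-piece property, the domain $M_i^{+}=M\cap\{x_i>0\}$ is connected, and $x_i>0$ there solves $-\Delta x_i=n\,x_i$ with Dirichlet data, so $x_i$ is the first Dirichlet eigenfunction of $-\Delta$ on $M_i^{+}$ with $\lambda_1^{D}(M_i^{+})=n$. The Rayleigh characterisation therefore gives
\[
\int_{M_i^{+}}|\nabla\phi|^{2}\,\dd M\;\geq\;n\int_{M_i^{+}}\phi^{2}\,\dd M\qquad\forall\,\phi\in H^{1}_{0}(M_i^{+}).
\]
I apply this with $\phi=N_i|_{M_i^{+}}$, allowed by its $\sigma_i$-odd parity. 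Since $|\nabla N_i|^{2}$ and $N_i^{2}$ are $\sigma_i$-invariant the integration doubles to all of $M$, yielding $\int_{M}|\nabla N_i|^{2}\dd M\geq n\int_{M}N_i^{2}\dd M$. Integration by parts against $\Delta N_i=-S N_i$ rewrites the left-hand side as $\int_{M}S\,N_i^{2}\dd M$, so
\[
\int_{M}(S-n)N_i^{2}\,\dd M\;\geq\;0\qquad(i=1,\dots,n+2).
\]
Summing over $i$ and invoking the pointwise identity $\sum_{i=1}^{n+2}N_i^{2}=|N|^{2}=1$ gives $\int_{M}(S-n)\,\dd M\geq 0$, i.e.\ $\bar S\geq n$.

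For the equality case, $\bar S=n$ forces equality in each bound $\int_{M}(S-n)N_i^{2}\dd M\geq 0$, so each $N_i$ attains the first Dirichlet eigenvalue on $M_i^{+}$. Simplicity of that eigenvalue on a connected domain yields $N_i=c_i x_i$ for some constant $c_i\in\mathbb{R}$. The three pointwise identities $\sum x_i^{2}=1$, $\sum c_i^{2}x_i^{2}=\sum N_i^{2}=1$ and $\sum c_i x_i^{2}=\la f,N\ra=0$ constrain the map $p\mapsto(x_1^{2}(p),\dots,x_{n+2}^{2}(p))$ to lie in an affine subspace of $\mathbb{R}^{n+2}$; a Vandermonde-type dimension count, using $\dim M=n$, forces the $c_i$ to take exactly two distinct values. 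Then $M$ sits inside $\{\sum_{c_i=\alpha}x_i^{2}=\mathrm{const}\}\cap S^{n+1}$ and minimality fixes the radii to those of some Clifford torus $C_{m,n-m}$.

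The decisive observation is that $N_i$ lies in the same parity sector as $x_i$ and is therefore an admissible test function for the Dirichlet problem on $M_i^{+}$; with that in hand the main inequality reduces to a very short computation (by contrast, the natural-looking choices $\phi=x_i$, $\phi=x_iF$ or $\phi=x_ix_j$ only recover the bare Simons identity $\int S^{2}\ge n\int S$). I expect the most delicate piece to be the equality analysis---specifically the dimension count ruling out three or more distinct values of $c_i$, which must cleanly exclude any conceivable isoparametric minimal hypersurface with more than two principal curvatures.
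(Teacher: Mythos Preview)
Your argument for the inequality $\bar S\ge n$ is exactly the paper's: the two-piece property makes $x_i$ a positive first Dirichlet eigenfunction on $M_i^{+}$ with eigenvalue $n$, you test with $N_i$ (which vanishes on $\partial M_i^{+}$), integrate by parts using $\Delta N_i=-S N_i$, and sum over $i$ via $\sum N_i^{2}=1$. One small point: your assertion that $N_i$ is $\sigma_i$-odd is not automatic; it is equivalent to $\sigma_i|_M$ being orientation-reversing. The paper instead argues directly that $M$ meets $P_i$ orthogonally (otherwise $M\subset P_i$ is totally geodesic), whence $e_i\in T_pM$ and $N_i(p)=0$ on $\partial M_i^{+}$; this also shows $\sigma_i|_{T_pM}$ is a reflection, which \emph{a posteriori} justifies your parity claim.

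Where you diverge is the equality analysis, and there you are working much harder than necessary. Once equality holds, each $N_i|_{M_i^{+}}$ is either identically zero or a first Dirichlet eigenfunction, so $\Delta N_i=-nN_i$ on $M_i^{+}$. Comparing with the global identity $\Delta N_i=-SN_i$ gives $(S-n)N_i=0$ pointwise. A first eigenfunction is nowhere zero in the interior, so $S\equiv n$ on $\mathrm{int}\,M_i^{+}$ for every $i$ with $N_i\not\equiv 0$; since $\sum N_i^{2}=1$ rules out all $N_i\equiv 0$, continuity yields $S\equiv n$ on $M$. The paper then simply invokes the Chern--do Carmo--Kobayashi/Lawson rigidity theorem to conclude $M\cong C_{m,n-m}$. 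Your Vandermonde dimension count from $N_i=c_i x_i$ does work (and indeed forces the $c_i$ to take exactly two values with product $-1$), but it reproves a special case of that classical rigidity theorem by hand; the worry you flag about excluding isoparametric hypersurfaces with three or more principal curvatures evaporates once you notice $S\equiv n$.
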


We can also rewrite it as a Simons' type Theorem
\begin{theorem}\label{thm-11}
	Let $f:M\rightarrow S^{n+1}$ be an embedded  closed minimal hypersurface symmetric under the $(n+2)$ coordinate hyperplanes reflections. If
	\begin{equation}
	0\leq \int_M (n-S)\dd M,
	\end{equation}
then either $S\equiv 0$ and $f$ is a totally geodesic hypersphere, or $S\equiv n$ and $f$ is congruent to   some Clifford minimal torus $C_{m,n-m}$.
\end{theorem}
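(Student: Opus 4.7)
The statement is a Simons-type reformulation of Theorem~\ref{thm-1}, and my plan is to deduce it directly from that theorem after separating off the totally geodesic case. The hypothesis $0\leq\int_M(n-S)\dd M$ says exactly $\bar S\leq n$, so the two theorems together pin down $\bar S$ completely, and only the two extremal configurations can survive.

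First I would split into two cases depending on whether $f$ is totally geodesic. If $f$ is totally geodesic then $h_{ij}\equiv 0$, so $S\equiv 0$ and $f$ is congruent to an equator $S^n\subset S^{n+1}$; this is compatible with the hypothesis, since $\int_M(n-S)\dd M=n\,Vol(M)\geq 0$, and it is the first alternative. Otherwise, Theorem~\ref{thm-1} applies and yields $\bar S\geq n$, i.e.\ $\int_M(S-n)\dd M\geq 0$. Combining this with the standing hypothesis $\int_M(n-S)\dd M\geq 0$ forces $\bar S=n$, placing us in the equality case of Theorem~\ref{thm-1}, so $f$ is congruent to some Clifford minimal torus $C_{m,n-m}$. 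A direct computation of the principal curvatures of $C_{m,n-m}$, namely $\sqrt{(n-m)/m}$ with multiplicity $m$ and $-\sqrt{m/(n-m)}$ with multiplicity $n-m$, gives $S\equiv m\cdot\tfrac{n-m}{m}+(n-m)\cdot\tfrac{m}{n-m}=n$, which is the second alternative.

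There is essentially no obstacle here, since all the substance has been absorbed into Theorem~\ref{thm-1}; the work is to notice that the averaged hypothesis in Theorem~\ref{thm-11} is just the opposite of the conclusion of Theorem~\ref{thm-1}. For perspective, the classical route via Simons' identity $\tfrac12\Delta S=|\nabla h|^2+S(n-S)$ integrates on the closed manifold $M$ to $\int_M S(S-n)\dd M=\int_M|\nabla h|^2\dd M\geq 0$, and by itself this does not suffice to extract the dichotomy $S\equiv 0$ or $S\equiv n$ from a merely averaged condition such as $\int_M(n-S)\dd M\geq 0$ without a complementary lower bound on $\bar S$. It is precisely the symmetry hypothesis, channeled through Theorem~\ref{thm-1}, that converts this averaged Simons-type inequality into the pointwise rigidity conclusion.
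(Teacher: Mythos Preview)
Your proposal is correct and matches the paper's approach: the paper explicitly introduces Theorem~\ref{thm-11} as a rewriting of Theorem~\ref{thm-1} (``We can also rewrite it as a Simons' type Theorem'') and does not supply a separate proof, so the intended argument is exactly the case split you give --- totally geodesic forces $S\equiv 0$, and otherwise Theorem~\ref{thm-1} combined with the hypothesis $\bar S\leq n$ forces $\bar S=n$, hence $f$ is a Clifford torus with $S\equiv n$.
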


\begin{remark}\
\begin{enumerate}
\item Theorem \ref{thm-1} and Theorem \ref{thm-11} have close relations with the famous characterizations of rigidity of minimal hypersurfaces due to J. Simons \cite{Simons}, Chern-de Carmo-Kobayashi \cite{CDK} and Lawson \cite{Lawson}. The Simons inequality shows that if $0\leq S\leq n$, then $S\equiv0$ or $S\equiv n$. Moreover, if $S\equiv0$, we obtain round hypersphere. If $S\equiv n$, one obtains  the Clifford minimal tori $C_{m,n-m}$, $m=1,\cdots, n-1$ \cite{CDK,Lawson}. In the proof of the above results one will see that for  embedded  closed minimal hypersurface with given symmetries, the average $\bar S= n$ will force $S\equiv n$ and hence one obtains  the Clifford minimal tori $C_{m,n-m}$, $m=1,\cdots, n-1$.
\item Motivated by the work of \cite{Simons,CDK,Lawson}, Chern proposed the famous Chern conjecture, asking if the value distribution of $S$ is discrete or not, when $S$ is constant.  It turns out to be an important topic and received a lot of interesting results, some of which are also related to the Willmore functionals of hypersurfaces (See \cite{Chen,Ko,Li-h,Li-s,Marques2,Pinkall,Wang} for discussions on Willmore hypersurfaces). We refer to \cite{DX,PT,TY2,TY3,WX,YC} and reference therein for recent progress on this direction.
    \item Theorem \ref{thm-1} shows that the following open problem  due to Perdomo \cite{Per2} holds under the symmetric assumptions:

     Let $f$ be an oriented,  embedded,  closed minimal hypersurface with the average $\bar S= n$, i.e.,  $\int_M S\dd M=nVol(M).$
       Then $f$ must be congruent to some Clifford minimal torus $C_{m,n-m}$.

Perdomo proved that the above conjecture holds if $Index(M)=n+2$ \cite{Per2}, or if $M$ has two different principle curvatures \cite{Per3}.
       As pointed out in \cite{Per2}, when $n=2$, this goes back to the famous Lawson conjecture for embedded torus, which was proved by Brendle \cite{Brendle}. So this can be viewed as  the hypersurface version of the Lawson conjecture (note also that when $S\equiv n$ it reduces to the rigidity theorem of Chern-de Carmo-Kobayashi \cite{CDK} and Lawson \cite{Lawson}).

       Different from the case of surfaces, there are many embedded minimal hypersurfaces  in $S^{n+1}$ different from the great hyperspheres, which are diffeomorphic to $S^n$ or $S^1\times S^{n-1}$, when $n\geq 3$, \cite{Hsiang1,Hsiang2,Hsiang-S,Hsiang3}. So it is a more subtle problem for embedded minimal hypersurfaces  in $S^{n+1}$.  Perdomo's result \cite{Per3}
might be helpful for the discussions of the examples in \cite{Hsiang1,Hsiang2,Hsiang-S,Hsiang3}.
        \item One may consider furthermore the generalized Chern conjecture or Chern problem, that is, the value distribution of $\bar S$. Set
         $ \mathfrak{S}_a$ to be the set of all possible $\bar S$ for all  embedded closed minimal hypersurfaces $M$ in $ S^{n+1}$
        and set $ \mathfrak{S}_0$ to be the set of all possible numbers equal to $S$ of some embedded closed minimal hypersurface $M$ in $ S^{n+1}$  with constant $S$.   Chern's conjecture implies that $\mathfrak{S}$ is also a discrete set, with $\mathfrak{S}\subset \mathfrak{S}_a$. So it is natural to ask whether $\mathfrak{S}_a$ is also a discrete set or not.

\item In view of the above theorem, Simons' Theorem \cite{Simons} and Perdomo's Theorem \cite{Per2}, for general case it seems  reasonable to expect a gap phenomenon for $\bar S$: there exists some constant $C_n\leq n$ depending only on $n$ such that if $0\leq \bar S<C_n$, then $f$ is totally geodesic.

 \item
 The normalized scalar curvature satisfies  $R=1-\frac{1}{n(n-1)}S$ (See for example (2.6) of \cite{Li-s}, \cite{Per2}).
           So the above problem is equivalent to asking whether the average total scalar curvature of  embedded  closed minimal hypersurfaces in $S^{n+1}$ are quantized or not, although there are no topological obstructions when $n\geq3$.
          \item For CMC hypersurfaces in $S^{n+1}$, one can consider the similar questions in view of Pinkall-Sterling Conjecture \cite{AL} and its proof \cite{PT}.
\end{enumerate}
\end{remark}

Theorem \ref{thm-1} also yields an estimate of the Willmore functional of minimal hypersurfaces.
\begin{theorem}\label{thm-2}
	Let $f:M\rightarrow S^{n+1}$ be an embedded  closed minimal hypersurface symmetric under the $n+2$ coordinate hyperplances reflections.  Assume that $f$ is  not totally geodesic.  Then
	\begin{equation}\label{eq-W0}
	W(M)\geq n^{\frac{n}{2}} Vol(M),
	\end{equation}
	with equality holding if and only if  $f$ is congruent to   some Clifford minimal torus $C_{m,n-m}$.
\end{theorem}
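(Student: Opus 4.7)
The plan is to reduce Theorem \ref{thm-2} to Theorem \ref{thm-1} via a one-line application of Jensen's inequality, after rewriting the Willmore functional in a form adapted to the minimality hypothesis.

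First, since $f$ is minimal we have $H\equiv 0$, so
\[
\rho=\sqrt{S-nH^{2}}=\sqrt{S},\qquad W(M)=\int_{M}\rho^{n}\dd M=\int_{M}S^{n/2}\dd M.
\]
Since $n\geq 2$, the function $t\mapsto t^{n/2}$ is convex on $[0,\infty)$, so Jensen's inequality applied to the probability measure $\tfrac{1}{Vol(M)}\dd M$ gives
\[
\frac{1}{Vol(M)}\int_{M}S^{n/2}\dd M\ \geq\ \left(\frac{1}{Vol(M)}\int_{M}S\,\dd M\right)^{\!n/2}=\bar{S}^{\,n/2}.
\]
Next I invoke Theorem \ref{thm-1}: under the symmetry hypothesis and the non-totally-geodesic assumption, $\bar{S}\geq n$. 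Chaining the two inequalities yields
\[
W(M)=\int_{M}S^{n/2}\dd M\ \geq\ Vol(M)\,\bar{S}^{\,n/2}\ \geq\ n^{n/2}\,Vol(M),
\]
which is exactly \eqref{eq-W0}.

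For the equality case, suppose $W(M)=n^{n/2}Vol(M)$. Then both estimates above must be equalities. Equality in Jensen forces $S$ to be constant on $M$ (using strict convexity of $t\mapsto t^{n/2}$ for $n\geq 2$ on $\{S>0\}$, together with the fact that on a non-totally-geodesic minimal hypersurface $S$ cannot be identically zero). Equality in Theorem \ref{thm-1} then pins down $f$ as a Clifford minimal torus $C_{m,n-m}$. Conversely, for $C_{m,n-m}$ one has $S\equiv n$ constant, which directly gives $W(C_{m,n-m})=n^{n/2}Vol(C_{m,n-m})$.

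There is essentially no technical obstacle here; the content of the statement lies entirely in Theorem \ref{thm-1}, and the Willmore estimate is a clean corollary. The only mild subtlety worth flagging in the write-up is the equality discussion: one should note that $S$ is automatically non-negative and, for a non-totally-geodesic minimal hypersurface, positive on a set of full measure, so the convexity argument in the equality case does apply without ambiguity.
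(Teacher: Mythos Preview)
Your proof is correct and follows essentially the same route as the paper: the paper packages the convexity step as a H\"older inequality (Theorem~\ref{thm-22}) rather than Jensen, but for the power function $t\mapsto t^{n/2}$ these are the same inequality, and both proofs then feed $\bar S\geq n$ from Theorem~\ref{thm-1} into it. One tiny quibble: $t\mapsto t^{n/2}$ is not \emph{strictly} convex when $n=2$, but in that case Jensen is automatically an equality and the conclusion still follows from the equality case of Theorem~\ref{thm-1} alone.
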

Note that  the Clifford minimal torus $C_{m,n-m}$ is   Willmore   if and only if $n=2m$ \cite{GLW}.\ \\
	
We will give the proofs of Theorem \ref{thm-1} and Theorem \ref{thm-2} in Section 2 and Section 3 respectively.

\section{On minimal hypersurfaces with symmetries}

First we recall the two-piece properties of embedded minimal hypersurfaces in $S^{n+1}$ due to Ros \cite{Ros} when $n=2$ and Choe-Soret \cite{choesoret} when $n\geq 3$ (See   \cite{McG} for an alternate proof).
\begin{theorem} {\em Ros, Choe and Soret } \cite{Ros,choesoret}\label{thm-t} Let $f:M\rightarrow S^{n+1}\subset\R^{n+2}$ be an embedded, oriented minimal hypersurface, which is not totally geodesic. Then any great hypersphere of $S^{n+1}$ divides $f$ into two connected pieces.
\end{theorem}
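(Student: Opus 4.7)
The plan is to approach the theorem through the linear height function associated with $\Sigma$ combined with a reflection argument, in the spirit of Ros and Choe--Soret. Let $e\in S^{n+1}\subset\R^{n+2}$ be a unit pole of $\Sigma$, so $\Sigma=e^{\perp}\cap S^{n+1}$, and set $u:=\langle f,e\rangle$ on $M$. Minimality of $f$ gives the classical identity $\Delta_M u+nu=0$, so $u$ is a smooth Laplace eigenfunction of eigenvalue $n$, with $M\cap\Sigma=u^{-1}(0)$, and the connected components of $M\setminus\Sigma$ coincide with the nodal domains of $u$. By Sard, after an arbitrarily small perturbation of $e$ one may assume $0$ is a regular value of $u$ (the general case following by continuity), so $M\cap\Sigma$ is a smooth closed $(n-1)$-submanifold. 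On each nodal domain $\Omega$ the function $|u|$ is strictly positive in the interior, vanishes on $\partial\Omega$, and satisfies $(\Delta+n)|u|=0$; by uniqueness of the positive Dirichlet ground state it is the first Dirichlet eigenfunction, and therefore $\lambda_1^{D}(\Omega)=n$ for every nodal domain.

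Next I would derive a contradiction from the assumption that $M^{+}:=\{u>0\}$ has two distinct connected components $\Omega_1,\Omega_2$. Let $\tau:S^{n+1}\to S^{n+1}$ be the isometric reflection across $\Sigma$; then $\tilde\Omega_1:=\tau(\Omega_1)$ is an embedded minimal hypersurface in the opposite closed hemisphere with boundary $\partial\Omega_1\subset\Sigma$. Since $M$ is embedded in $S^{n+1}$, the interior of $\tilde\Omega_1$ is either disjoint from $M^{-}$ or touches some component of $M^{-}$ tangentially at an interior point. In the tangential case the strong maximum principle for minimal hypersurfaces forces $\tilde\Omega_1$ to coincide with a whole component of $M^{-}$, producing a hidden $\tau$-symmetry of $M$; iterating this argument across a family of nearby great hyperspheres as in \cite{choesoret} would then force $M$ to be totally geodesic, against hypothesis. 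In the disjoint case, rotating $\Sigma$ through a one-parameter family of great hyperspheres produces a first moment of tangential contact, again ruled out by the maximum principle. Hence $M^{+}$ has a single component, and symmetrically so does $M^{-}$.

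The delicate step is the contradiction just sketched: turning the heuristic combination of embeddedness, reflection, and the strong maximum principle into a rigorous argument in general, without knowing a priori that $M$ meets $\Sigma$ orthogonally, so that the doubled hypersurface $\Omega_1\cup\tilde\Omega_1$ is only $C^{0}$ along $\Sigma$. For $n=2$ Ros bypasses this issue via a sharp area/isoperimetric comparison on $S^{2}$; for $n\geq 3$ Choe--Soret carry it out through a careful geodesic-sweep plus maximum principle argument, and McGonagle gives an alternative proof using a calibration-type inequality. I would follow the reflection-plus-sweep approach and refer to \cite{Ros,choesoret,McG} for the detailed implementation of this step.
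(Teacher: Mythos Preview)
The paper does not supply its own proof of this theorem: it is stated as Theorem~\ref{thm-t} and attributed to Ros \cite{Ros} (for $n=2$) and Choe--Soret \cite{choesoret} (for general $n$), with \cite{McG} cited for an alternative argument. The result is then used as a black box in the proof of Theorem~\ref{thm-1}. So there is nothing in the paper to compare your argument against; your ultimate deferral to \cite{Ros,choesoret,McG} for the ``delicate step'' is, in effect, exactly what the paper itself does.

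On the sketch itself: the first paragraph (the height function $u=\langle f,e\rangle$, the equation $\Delta u+nu=0$, the identification of components of $M\setminus\Sigma$ with nodal domains, and $\lambda_1^{D}(\Omega)=n$ on each) is correct and standard. In the second paragraph, however, the dichotomy ``either the interior of $\tilde\Omega_1$ is disjoint from $M^{-}$ or touches some component of $M^{-}$ tangentially'' is not a consequence of the embeddedness of $M$. Since $\tilde\Omega_1=\tau(\Omega_1)$ is a reflected piece and not part of $M$, nothing prevents a transversal intersection with $M^{-}$; without that dichotomy the maximum-principle contradiction does not get off the ground as written. You correctly flag this as the place where the real work lies, and the honest conclusion is that your outline is a heuristic wrapper around the cited references rather than an independent proof.
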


The proof of Theorem 1.1 by Ros \cite{Ros}  can be used directly to give a proof of Theorem \ref{thm-1}. For readers' convenience, we include it here.

\begin{proof} of Theorem \ref{thm-1}:  Let $f_i$ denote the $i-th$
 coordinate function of the position vector $f$ and let $N_i$ denote the $i-th$ coordinate function of the unit normal vector $N$ of $f$, $1\leq i\leq n+2$. Consider the coordinate hyperplane $P_i:=\{x_i=0\}$, $1\leq i\leq n+2$. By Theorem \ref{thm-t}, it divides $M$ into two connected domains. Let $M_i^+=M\cap\{x_i\geq0\}$ and  $M_i^-=M\cap\{x_i\leq0\}$. Since $f$ is minimal, we obtain
 \begin{equation}\label{eq-min}
 \Delta f_i+nf_i=0,\ \Delta N_i+ S N_i=0.
 \end{equation}

Since $f_i$ does not change sign on $M_i^+$, we conclude by the  Courant Nodal Theorem that the first eigenvalue of the Laplacian on $M_i^+$, with the Dirichlet boundary condition, is equal to $n$. Since $f$ is symmetric with the coordinate hyperplane $P_i$ and since $f$ is not contained in $P_i$ (otherwise $f$ is totally geodesic), we have that $f$ insects $P_i$ orthogonally, that is, the unit normal $\mathbf {e}_i=(0,\cdots0,1,0,\cdots,0)^t$ of $P_i$ is contained in $T_pM$ for each $p\in M\cap P_i=\partial M_i^+=\partial M_i^-$. So $N\perp \mathbf e_i$ on $\partial M_i^+$, that is,
\[N_i|_{\partial M_i^+}=0.\]
Therefore
\begin{equation}\label{eq-Ni}
\int_{M_{i}^+}  S N_i^2\dd M=-\int_{M_{i}^+}  N_i\Delta N_i\dd M=\int_{M_{i}^+} |\nabla N_i|^2\dd M\geq
 n\int_{M_{i}^+} N_i^2\dd M,
\end{equation}
since $n$ is the first eigenvalue of the Laplacian on $M_i^+$.

Similarly we have
\begin{equation*}
\int_{M_{i}^-}  S N_i^2\dd M \geq
 n\int_{M_{i}^-} N_i^2\dd M,
\end{equation*}
and hence
\begin{equation}
\int_{M}  S N_i^2\dd M \geq
 n\int_{M} N_i^2\dd M.
\end{equation}
Summing up, we obtain
\begin{equation}
\int_{M}  S \dd M=\sum_i\int_{M} S N_i^2\dd M \geq
 n\int_{M}\dd M.
\end{equation}
If the equality holds, then all the inequalities appeared above become equalities. In particular, by \eqref{eq-Ni} we see that $N_i$ is also an eigenfunction with eigenvalue $n$.
As a consequence, we obtain $S\equiv n$. By \cite{CDK,Lawson1}, $f$ is congruent to some Clifford minimal torus $C_{m,n-m}$.
\end{proof}
Note that in the above proof, the embedness is only used to derive the two-piece properties. So  if we replace the embedding assumption by  the first eigenvalue $\lambda_1$ of the Laplacian of $f$ being equal to $n$, then any coordinate hyperplane still divides $M$ into two conneced pieces. The other proofs are the same and hence we obtain the following theorem.
\begin{theorem}\label{thm-4}
	Let $f:M\rightarrow S^{n+1}$ be an oriented, non-totally geodesic, closed minimal hypersurface, which  is  symmetric under the $(n+2)$ coordinate hyperplanes reflections. Assume that  $\lambda_1=n$, where $\lambda_1$ denotes the first eigenvalue of the Laplacian of $f$. Then
	\begin{equation}
	\bar S\geq n
	\end{equation}
	with equality holding if and only if $f$ is congruent to some  Clifford torus $Cl_{m,n-m}$.
\end{theorem}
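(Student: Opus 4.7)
The plan is to inspect the proof of Theorem \ref{thm-1} and notice that embeddedness enters only through the Choe--Soret two-piece property, which guarantees that each coordinate hyperplane $P_i$ divides $M$ into two connected pieces $M_i^\pm$. It therefore suffices to re-derive this two-piece property from the alternative hypothesis $\lambda_1 = n$ via Courant's Nodal Domain Theorem; once that is in hand, the remainder of Ros' argument transfers to the present setting without change.

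First I would observe that each coordinate function $f_i$ is nonconstant. Indeed, if $f_i\equiv 0$ then $f(M)\subset S^{n+1}\cap P_i$, a totally geodesic $S^n$; since $M$ is closed and $n$-dimensional, $f$ would then cover this $S^n$ and be totally geodesic, contradicting the hypothesis. Minimality gives $\Delta f_i+nf_i=0$, so $f_i$ is an eigenfunction for the eigenvalue $n=\lambda_1$. Since the reflection symmetry $\sigma_i$ sends $f_i$ to $-f_i$, the function $f_i$ is sign-changing, so Courant's theorem on the closed manifold $M$ forces the sets $M_i^+=\{f_i\geq 0\}$ and $M_i^-=\{f_i\leq 0\}$ to be precisely the two nodal domains of $f_i$, and both are connected.

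With this two-piece structure available, the remaining steps are copied verbatim from the proof of Theorem \ref{thm-1}: the $\sigma_i$-symmetry forces $M$ to meet $P_i$ orthogonally, so $N_i|_{\partial M_i^\pm}=0$; since $f_i$ is a positive (respectively negative) eigenfunction with eigenvalue $n$ on $M_i^+$ (respectively $M_i^-$), $n$ is the first Dirichlet eigenvalue there; feeding $N_i$ into the Rayleigh quotient and using $\Delta N_i+SN_i=0$ yields
\begin{equation*}
\int_{M_i^\pm} S N_i^2\,\dd M=\int_{M_i^\pm} |\nabla N_i|^2\,\dd M\geq n\int_{M_i^\pm} N_i^2\,\dd M.
\end{equation*}
Adding the two pieces, then summing over $i$ and using $\sum_i N_i^2\equiv 1$, produces $\bar S\geq n$. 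In the equality case every inequality above must be sharp, so each $N_i$ is also a first eigenfunction on each piece; this forces $S\equiv n$ on $M$, and the rigidity theorem of \cite{CDK,Lawson} identifies $f$ with some Clifford minimal torus $C_{m,n-m}$.

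The only genuinely new step compared to Theorem \ref{thm-1} is the Courant nodal-domain argument; its main (minor) subtlety is the convention-sensitive statement of Courant's theorem on a closed manifold, where the constant eigenfunction with eigenvalue $0$ must be counted in order to obtain the bound of at most two nodal domains for an eigenfunction associated to $\lambda_1$. Beyond this point the argument is pure bookkeeping, a line-by-line transcription of Ros' proof reproduced in Section~2.
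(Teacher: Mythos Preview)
Your proposal is correct and follows essentially the same approach as the paper. The paper's own argument is even terser: it simply remarks that in the proof of Theorem~\ref{thm-1} embeddedness is used only to obtain the two-piece property, and that the hypothesis $\lambda_1=n$ (via the Courant nodal domain theorem, exactly as you spell out) recovers this two-piece decomposition for each coordinate hyperplane, after which the rest of the proof carries over verbatim.
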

We would like to have more discussions on Perdomo's interesting relevant works \cite{Per1,Per2}.
\begin{remark} \
\begin{enumerate}
	\item In \cite{Per2}, Perdomo shows that if $f$ is embedded and has Morse index $n+2$ (see \cite{Simons,Per1,Per2} for the discussion on index of minimal submanifolds), then $\bar S\geq n$ holds and equiliy holds if and only if $f$ is  $f$ is congruent to some  Clifford torus $Cl_{m,n-m}$, which generalized the famous characterization of Clifford torus \cite{Ur}.
	The estimate of index of Clifford torus plays an important role in the proof of Willmore conjecture in $S^3$ \cite{Marques}.
	\item  Another interesting estimate due to Perdomo \cite{Per1} is the following characterization of minimal hypersurfaces with symmetries.
\end{enumerate}	
\end{remark}
We define the symmetric subgroup of $M$ in $O(n+2)$ as follow
\[O_M(n+2):=\{\gamma\in O(n+2)| \gamma (M)=M\}.\]
\begin{theorem} {\em Perdomo} \cite{Per1} \label{thm-P}
		Let $f:M\rightarrow S^{n+1}$ be an oriented,  closed, non-equatorial minimal hypersurface.
	Assume that $O_M(n+2)$ only fixes the original points of $\R^{n+2}$.	Then
	$Index(M)\geq n+2$,		with equality holding if and only if $f$ is congruent to  some Clifford minimal torus $C_{m,n-m}$.
\end{theorem}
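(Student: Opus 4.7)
My plan is to produce an explicit $(n+2)$-dimensional positive eigenspace of the Jacobi operator $L := \Delta + S + n$ via the Gauss map, and then to use the symmetry hypothesis to pin down the equality case. Recall that $\mathrm{Index}(M)$ counts, with multiplicity, the positive eigenvalues of $L$. For each $v \in \R^{n+2}$ set $\phi_v := \la N, v \ra$; the minimal hypersurface identity $\Delta N_i = -S N_i$ (recalled in the proof of Theorem \ref{thm-1}) yields
\[
L \phi_v = n\phi_v,
\]
so every $\phi_v$ is an eigenfunction of $L$ with positive eigenvalue $n$, and the linear map $\Phi : \R^{n+2} \to C^\infty(M)$, $v \mapsto \phi_v$, embeds $\mathrm{Im}(\Phi)$ into the positive spectrum. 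Hence $\mathrm{Index}(M) \geq \dim \mathrm{Im}(\Phi)$.

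To deduce $\dim \mathrm{Im}(\Phi) = n+2$ I would verify that $\Phi$ is injective. Its kernel equals $(\mathrm{span}\,N(M))^{\perp}$, and because $N$ intertwines $O_M(n+2)$ with its ambient action on $\R^{n+2}$ up to sign, $\ker \Phi$ is an $O_M(n+2)$-invariant subspace. The main obstacle is that the hypothesis ``no nonzero $O_M(n+2)$-fixed vector'' is strictly weaker than irreducibility (for example $\{\pm I\}$ satisfies it while leaving every subspace invariant), so one cannot invoke Schur's lemma. To close the argument I would combine the $O_M(n+2)$-equivariance of the pair $(f,N)$ with the fact that the centroids $\int_M f\,\dd M$ and $\int_M N\,\dd M$ are $O_M(n+2)$-fixed and therefore vanish, together with the non-equatorial hypothesis (which ensures $f(M)$ spans $\R^{n+2}$), to rule out any nonzero $v \in \ker \Phi$.

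For the equality case $\mathrm{Index}(M) = n+2$, the image $\mathrm{Im}(\Phi)$ exhausts the positive spectrum of $L$ and every positive eigenvalue equals $n$. Decomposing $C^\infty(M)$ as an $O_M(n+2)$-representation, testing the eigenvalue equation against $O_M(n+2)$-invariant functions in the $L^2$-orthogonal complement of $\mathrm{Im}(\Phi)$, and invoking Simons' identity $\tfrac{1}{2}\Delta S = |\nabla A|^2 + S(n-S)$, forces $S$ to be the constant $n$. The Chern--de Carmo--Kobayashi--Lawson classification then identifies $f$ with some Clifford minimal torus $C_{m,n-m}$, completing the characterization.
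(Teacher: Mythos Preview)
The paper does not prove this theorem at all: it is stated as a result of Perdomo, cited from \cite{Per1}, and then immediately used to deduce the subsequent corollary. There is no argument in the paper to compare your proposal against.

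On the substance of your sketch: using $\phi_v=\la N,v\ra$ as $L$-eigenfunctions with eigenvalue $n$ is the standard opening move, but both nontrivial steps remain unfinished. For injectivity of $\Phi$ you correctly note that ``fixes only the origin'' is weaker than irreducibility, so $O_M(n+2)$-invariance of $\ker\Phi$ does not by itself force it to vanish; your fallback via the centroids $\int_M f$ and $\int_M N$ is not an argument, since the vanishing of those integrals does not preclude a nonzero $v$ with $\la N,v\ra\equiv0$. (In fact injectivity follows from the non-equatorial assumption alone, with no symmetry needed: if $\la N,v\ra\equiv0$ then $h:=\la f,v\ra$ satisfies $\nabla^2 h=-h\,g$ by the Gauss formula, so either $h\equiv0$ and $M\subset v^\perp\cap S^{n+1}$, or Obata's theorem makes $(M,g)$ isometric to the round $S^n(1)$, whence the Gauss equation forces $S\equiv0$; either way $M$ is equatorial.) You also overlook the constant function: $\int_M 1\cdot L1=\int_M(S+n)\,\dd M>0$, and once $\int_M N_i=0$ is established (this is where the symmetry hypothesis genuinely enters) the constants are $L^2$-orthogonal to $\mathrm{Im}(\Phi)$ and contribute an additional unit to the index beyond your $n+2$. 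Indeed the sharp bound in \cite{Per1} is $n+3$ (the Clifford tori have index $n+3$, as one checks directly), so the statement as quoted here appears to carry a misprint that your outline inherits.

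For the equality case your argument is only a gesture: you have not exhibited any concrete test function, inequality, or variational identity that converts ``$\mathrm{Im}(\Phi)$ together with the constants exhausts the positive spectrum of $L$'' into $S\equiv n$. Naming Simons' identity and ``$O_M(n+2)$-invariant functions'' is not yet a proof; one needs an explicit mechanism by which the spectral rigidity and the symmetry jointly force $S$ to be constant.
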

This gives an index estimate of minimal hypersurfaces symmetric under all coordinate hyperplanes reflections, since the combination of them contains the anti-podal symmetry, which only fixes the original points of $\R^{n+2}$.
\begin{corollary}
	Let $f:M\rightarrow S^{n+1}$ be an oriented,  closed, non-equatorial minimal hypersurface, symmetric under all coordinate hyperplanes reflections. Then 	$Index(M)\geq n+2$,		with equality holding if and only if $f$ is congruent to  some Clifford minimal torus $C_{m,n-m}$.
\end{corollary}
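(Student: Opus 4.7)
The plan is to reduce the statement directly to Theorem \ref{thm-P}, so the only content to verify is that the symmetric group $O_M(n+2)$ satisfies the hypothesis of that theorem, namely that its only common fixed point in $\R^{n+2}$ is the origin.

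First I would observe that, by the hypothesis, each of the $(n+2)$ coordinate hyperplane reflections
\[
\sigma_i : (x_1,\ldots,x_{n+2})\longmapsto (x_1,\ldots,-x_i,\ldots,x_{n+2}), \qquad 1\leq i\leq n+2,
\]
belongs to $O_M(n+2)$. Hence $O_M(n+2)$ contains the composition $\sigma_1\circ\sigma_2\circ\cdots\circ\sigma_{n+2}$. A direct computation of this composition on the standard basis shows
\[
\sigma_1\circ\sigma_2\circ\cdots\circ\sigma_{n+2}=-I_{n+2},
\]
the antipodal map of $\R^{n+2}$. Since the only fixed point of $-I_{n+2}$ is the origin, any point of $\R^{n+2}$ fixed by every element of $O_M(n+2)$ must in particular be fixed by $-I_{n+2}$, hence must be the origin.

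Thus the hypothesis of Theorem \ref{thm-P} is satisfied, and the conclusion $Index(M)\geq n+2$ follows, together with the rigidity statement that equality forces $f$ to be congruent to some Clifford minimal torus $C_{m,n-m}$.

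There is no real obstacle here; the only thing to watch is that the argument genuinely uses all $(n+2)$ reflections at once (a proper subset of them does not generate the antipodal map in general), and that $M$ being non-equatorial is already part of the assumptions and matches Perdomo's hypothesis, so no further check is needed.
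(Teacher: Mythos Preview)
Your proof is correct and follows exactly the paper's own justification: the composition of the $(n+2)$ coordinate reflections is the antipodal map $-I_{n+2}$, whose only fixed point is the origin, so Perdomo's Theorem \ref{thm-P} applies directly.
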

\vspace{2mm}

\section{Estimate of Willmore energy of minimal hypersurfaces with symmetries}

The study of the relations between the Willmore functional and other geometric data of a hypersurfaces or in general a submanifold are highly non-trivial when the dimension of $M$ is greater or equal to $3$. We first use the classical H\"{o}lder inequality to obtain a simple estimate of Willmore functional  for all $n-$dimensional submanifolds in $S^{n+p}$, which holds trivially when $n=2$. We refer to \cite{Li-s,Wang} for more details on Willmore submanifolds and \cite{Li-y,Marques,Marques2, Montiel} for further discussions on Willmore surfaces.
\begin{theorem}\label{thm-22}
	Let $f:M\rightarrow S^{n+p}$ be an $n-$dimensional, oriented closed submanifold. Then
	\begin{equation}\label{eq-W1}
	W(M)\geq  \frac{1}{Vol(M)^{\frac{n}{2}-1}}\left(\int_M(S-nH^2)\dd M\right)^{\frac{n}{2}}=\left(\bar S-n\overline{(H^2)}\right)^{\frac{n}{2}}Vol(M).
	\end{equation}
	Here we define
$\overline{(H^2)}:=\frac{1}{Vol(M)}\int_MH^2\dd M.$
	In particular, if $f$ is minimal, then
		\begin{equation}\label{eq-W2}
		W(M)\geq \bar S^{\frac{n}{2}}Vol(M).
		\end{equation}
\end{theorem}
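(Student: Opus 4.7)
My plan is to observe that the claimed estimate is nothing but the standard monotonicity of $L^p$ norms on the finite measure space $(M, dM)$ applied to the function $\rho^2$. Since $W(M) = \int_M \rho^n \, dM$ and $\rho^2 = S - nH^2 \geq 0$, the first inequality to be proved is equivalent to
\begin{equation*}
\int_M (\rho^2)^{n/2} \, dM \;\geq\; \frac{1}{Vol(M)^{n/2-1}} \left(\int_M \rho^2 \, dM\right)^{n/2}.
\end{equation*}

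Assuming $n \geq 2$, so that $n/2 \geq 1$, I would apply H\"{o}lder's inequality with conjugate exponents $n/2$ and $n/(n-2)$ (the case $n=2$ being trivial, since both sides then reduce to $\int_M \rho^2 \, dM$) to the product $\rho^2 \cdot 1$:
\begin{equation*}
\int_M \rho^2 \, dM \;\leq\; \left(\int_M \rho^n \, dM\right)^{2/n} \cdot Vol(M)^{(n-2)/n}.
\end{equation*}
Raising both sides to the $(n/2)$-th power and rearranging yields the first inequality. The equality between the two expressions in the statement is then bookkeeping: by definition $\int_M \rho^2 \, dM = \int_M (S - nH^2)\, dM = Vol(M)\cdot(\bar{S} - n\,\overline{(H^2)})$, and the powers of $Vol(M)$ combine as $Vol(M)^{n/2}/Vol(M)^{n/2-1} = Vol(M)$.

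The minimal case is then an immediate specialization: when $H \equiv 0$, one has $\overline{(H^2)} = 0$, and the estimate collapses to $W(M) \geq \bar{S}^{n/2} Vol(M)$. I do not anticipate any real obstacle here, since the argument is a single application of H\"{o}lder and is trivially an equality in dimension $n = 2$. The content of the theorem lies entirely in the point being made just before it in the text, namely that for $n \geq 3$ the Willmore energy dominates a nontrivial power of the averaged second fundamental form, which will then feed into the proof of Theorem~\ref{thm-2} via the estimate $\bar{S} \geq n$ from Theorem~\ref{thm-1}.
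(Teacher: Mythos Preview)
Your proposal is correct and is essentially identical to the paper's own proof: both apply H\"{o}lder's inequality with exponents $n/2$ and $n/(n-2)$ to the product $\rho^2\cdot 1$, obtain $\int_M \rho^2\,\dd M \le W(M)^{2/n}\,Vol(M)^{(n-2)/n}$, raise to the $n/2$ power, and then specialize to $H\equiv 0$ for the minimal case. Your write-up is in fact slightly more careful than the paper's in that you explicitly note the triviality of the case $n=2$ and spell out the bookkeeping for the equality in \eqref{eq-W1}.
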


\begin{proof}
By the
 H\"{o}lder inequality, we obtain
 \[ \left(\int_M \rho^n\dd M\right)^{\frac{2}{n}}\left(\int_m1^{\frac{n}{2}}\dd M\right)^{\frac{n-2}{n}}\geq \int_M \rho^2\dd M,
 \]
i.e.
\[W(M)^{\frac{2}{n}}\left(Vol(M)\right)^{\frac{n-2}{n}}\geq \int_M (S-nH^2)\dd M.\]
Hence \eqref{eq-W1} follows. Substituting $H\equiv0$ into  \eqref{eq-W1} for minimal submanifolds yields  \eqref{eq-W2}.
\end{proof}

Theorem \ref{thm-2} follows immediately from Theorem \ref{thm-22}.
\begin{proof} of Theorem \ref{thm-2}: By Theorem \ref{thm-1}, we have
\[\bar S\geq n\]
and equality holds if and only $f$ is congruent to the Clifford torus $C_{m,n-m}$ for some $m\in\{1,2,\cdots,n-1\}$.  Substituting this into \eqref{eq-W2}, we get \eqref{eq-W0}.
If the equality holds, we see that first one needs $\bar S=n$, which means $f$ is congruent to the Clifford minimal tori $C_{m,n-m}$, $m=1,\cdots, n-1$. For  $C_{m,n-m}$, one check easily the equality case of \eqref{eq-W0} holds.
\end{proof}

Similar to Theorem \ref{thm-4}, we also have
\begin{theorem}\label{thm-5}
	Let $f:M\rightarrow S^{n+1}$ be an closed minimal hypersurface symmetric under the $n+2$ coordinate hyperplances reflections.  Assume that $f$ is  not totally geodesic and $\lambda_1=n$, where $\lambda_1$ denotes the first eigenvalue of the Laplacian of $f$. Then
	\begin{equation}\label{eq-W0}
	W(M)\geq n^{\frac{n}{2}} Vol(M),
	\end{equation}
	with equality holding if and only if  $f$ is congruent to   some Clifford minimal torus $C_{m,n-m}$.
\end{theorem}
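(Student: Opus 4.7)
The plan is to derive Theorem \ref{thm-5} as an immediate corollary of two results already established in the paper: the integral curvature bound $\bar S \geq n$ of Theorem \ref{thm-4}, and the H\"older-type Willmore estimate \eqref{eq-W2} of Theorem \ref{thm-22}. The hypotheses of Theorem \ref{thm-5} are precisely those of Theorem \ref{thm-4} (closed, non-totally-geodesic, minimal, symmetric under the $(n+2)$ coordinate hyperplane reflections, and $\lambda_1=n$ in place of embeddedness), so that $\bar S \geq n$ is already in hand without any further argument.

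First I would apply the minimal case of Theorem \ref{thm-22} to obtain
\[
W(M) \geq \bar S^{\frac{n}{2}}\, Vol(M).
\]
Combining this with $\bar S \geq n$ from Theorem \ref{thm-4} yields
\[
W(M) \geq \bar S^{\frac{n}{2}}\, Vol(M) \geq n^{\frac{n}{2}}\, Vol(M),
\]
which is the desired inequality \eqref{eq-W0}.

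For the equality case, if $W(M) = n^{\frac{n}{2}}\, Vol(M)$, then the chain of inequalities above forces $\bar S = n$; by the equality statement of Theorem \ref{thm-4}, $f$ must be congruent to some Clifford torus $C_{m,n-m}$. Conversely, on $C_{m,n-m}$ one has $S\equiv n$ and hence $\rho \equiv \sqrt{n}$ is constant, so both the H\"older step in Theorem \ref{thm-22} and the bound $\bar S \geq n$ are saturated, giving $W(C_{m,n-m}) = n^{\frac{n}{2}}\, Vol(C_{m,n-m})$ by direct computation.

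There is essentially no obstacle here; the theorem is a purely formal combination of the two preceding results. All the real work was already carried out inside Theorem \ref{thm-4}, where the Courant Nodal Theorem is invoked on each of the symmetric halves $M_i^{\pm}$ to extract $\bar S \geq n$ from the hypothesis $\lambda_1 = n$, and inside Theorem \ref{thm-22}, where a single application of H\"older's inequality controls the $L^n$-norm of $\rho$ by its $L^2$-norm.
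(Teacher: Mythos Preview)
Your proposal is correct and matches the paper's approach exactly: the paper presents Theorem \ref{thm-5} without an explicit proof, merely noting it follows ``similar to Theorem \ref{thm-4}'', and the intended argument is precisely the one you give---combine $\bar S \geq n$ from Theorem \ref{thm-4} with the H\"older estimate \eqref{eq-W2} of Theorem \ref{thm-22}, exactly as in the proof of Theorem \ref{thm-2} but with Theorem \ref{thm-4} replacing Theorem \ref{thm-1}.
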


  \def\refname{References}
  
  	\vspace{2mm}

  	\end{document}